\documentclass[11pt, leqno]{amsart}
\usepackage{mathrsfs}
\usepackage{graphicx}
\usepackage{amsfonts,delarray,amssymb,amsmath,amsthm, a4,a4wide}
\usepackage{latexsym}
\usepackage{epsfig}
\usepackage{color}
\usepackage[T1]{fontenc}

\newtheorem{thm}{Theorem}[section]

\newtheorem{lem}[thm]{Lemma}
\newtheorem{prop}[thm]{Proposition}

\theoremstyle{remark}
\newtheorem{rem}[thm]{Remark}

\theoremstyle{definition}
\newtheorem{defn}[thm]{Definition}

\numberwithin{equation}{section}

\newcommand{\R}{\mathbb R}
\newcommand{\K}{\mathbb K}

\newcommand{\p}{\partial}

\newcommand{\comment}[1]{}
\def\h{\hspace*{.24in}}

\makeatletter
\@namedef{subjclassname@2020}{%
  \textup{2020} Mathematics Subject Classification}
\makeatother
\begin{document} 

\title[Equality case in Brunn--Minkowski inequality for Monge--Amp\`ere eigenvalue ]{On the equality case in the Brunn--Minkowski inequality for the Monge--Amp\`ere eigenvalue 
}
\author{Nam Q. Le}
\address{Department of Mathematics, Indiana University, 
Bloomington, IN 47405, USA}
\email{nqle@iu.edu}
\thanks{The author was supported in part by the National Science Foundation under grant DMS-2452320.}

\subjclass[2020]{ 35J96, 35P30}
\keywords{Monge--Amp\`ere eigenvalue, Brunn--Minkowski inequality}


\begin{abstract}
We characterize the equality case in the Brunn--Minkowski inequality for the Monge--Amp\`ere eigenvalue of general bounded convex domains. This together with the author's previous work completely resolves a question raised by Salani (A Brunn--Minkowski inequality for the Monge--Amp\`ere eigenvalue. {\it Adv. Math.} {\bf 194} (2005)). 
\end{abstract}
\maketitle
\section{The Brunn--Minkowski inequality for the Monge--Amp\`ere eigenvalue}

The eigenvalue problem for the Monge--Amp\`ere operator on smooth, uniformly convex domains $\Omega$ in $\R^n$ ($n\geq 2$) was 
first investigated by Lions \cite{Ls}. He showed that there exist a unique positive constant $\lambda(\Omega)$ and a unique (up to positive multiplicative constants) nonzero 
convex function $u\in C^{1,1}(\overline{\Omega})\cap C^{\infty}(\Omega)$ solving the Monge--Amp\`ere eigenvalue problem: 
\begin{equation}\label{EP}
\left\{
 \begin{alignedat}{2}
   \det D^2 u~& = \lambda(\Omega)|u|^n~&&\text{in} ~  \Omega, \\\
u &= 0~&&\text{on}~ \p\Omega.
 \end{alignedat} 
  \right.
  \end{equation}
 The constant $\lambda(\Omega)$ is called 
the Monge--Amp\`ere eigenvalue of $\Omega$ and the nonzero convex functions $u$ solving (\ref{EP}) are called the Monge--Amp\`ere eigenfunctions of $\Omega$.

\medskip
For a convex function $u\in C(\overline{\Omega})$ on a bounded convex domain $\Omega\subset\R^n$, 
we define its Rayleigh quotient by
\begin{equation}
\label{RQ}
R(u) = \frac{\int_{\Omega} (-u)\, d\mu_u}{\int_{\Omega} (-u)^{n+1}\,dx}.
\end{equation}
Here $\mu_u$ denotes the Monge--Amp\`ere measure associated with $u$; see Definition \ref{MAdef}. When $u\in C^2(\Omega)$, we have $d\mu_u = \det D^2 u\, dx$.

\medskip
A variational characterization of $\lambda(\Omega)$ was found by Tso \cite{Tso} who discovered that for sufficiently smooth, uniformly convex domains $\Omega\subset\R^n$,
\begin{equation}
 \label{lamT}
 \lambda(\Omega)=\inf\Big\{ R(u): u\in C^{0,1}(\overline{\Omega})\cap C^2(\Omega),~u~\text{is nonzero, convex in } 
 \Omega,~u=0~\text{on}~\p\Omega\Big\}.
\end{equation}

\medskip
As a function of smooth, uniformly convex domains in $\R^n$, $\lambda(\cdot)$ is positively homogeneous of degree $(-2n)$ in the sense that \[\lambda (t\Omega)= t^{-2n}\lambda (\Omega) \quad\text{for all } t>0.\] 
Using (\ref{lamT}), 
Salani \cite{Sal} proved a Brunn--Minkowski inequality
for $\lambda(\cdot)$ by showing that $\lambda^{-\frac{1}{2n}}(\cdot)$ (which is positively homogeneous of degree $1$) is concave in the class of 
$C^2$, uniformly convex domains in $\R^n$ endowed with the Minkowski addition. More precisely, Salani proved that if $\Omega_0$ and $\Omega_1$ are uniformly convex domains in $\R^n$  with $C^{2}$  boundaries, 
then
\begin{equation}
 \lambda(\Omega_{\alpha})^{-\frac{1}{2n}}\geq (1-\alpha) \lambda (\Omega_0)^{-\frac{1}{2n}} + \alpha \lambda(\Omega_1)^{-\frac{1}{2n}}\quad\quad\text{for all}~\alpha\in (0, 1),
 \label{lamBM}
\end{equation}
where $\Omega_{\alpha}$ denotes the Minkowski linear combination of $\Omega_0$ and $\Omega_1$ with weight $\alpha$:
\begin{equation}
\label{Min_lin}
 \Omega_{\alpha}= (1-\alpha)\Omega_0 + \alpha \Omega_1=\big\{(1-\alpha)x_0 + \alpha x_1: x_0\in\Omega_0, x_1\in\Omega_1\big\}.
\end{equation}
Moreover, equality occurs in \eqref{lamBM} if and only if $\Omega_1$ is homothetic to $\Omega_0$, that is, there exist $\beta>0$ and $b\in\R^n$ such that $\Omega_1= \beta\Omega_0 + b$. 

\medskip For the classical Brunn--Minkowski inequality for volume of convex bodies, see Schneider \cite[Chapter 7]{Sc} for an exposition and extension.

\medskip
Salani \cite[p. 85]{Sal} asked if the  Brunn--Minkowski inequality (\ref{lamBM}) and its equality case also hold for general bounded convex domains. 
In order to answer this question, we need to study the Monge--Amp\`ere eigenvalue for general  bounded convex domains. This was  done in  \cite{L}.

\medskip
\noindent
Let
\[
\K = \{  w \in C(\overline{\Omega}):  
 ~w~\text{is convex, nonzero in } \Omega,~ w=0~\text{on}~\p\Omega \}.
\]

\medskip
For general bounded convex domains $\Omega\subset\R^n$, the existence, uniqueness and variational characterization of the Monge--Amp\`ere eigenvalue, and uniqueness of  convex Monge--Amp\`ere eigenfunctions 
were obtained in \cite{L}. 
 They are
the singular counterparts of those established by  Lions \cite{Ls} and Tso \cite{Tso} in the smooth, uniformly convex setting. 
For the purpose of this note, we recall  here  \cite[Theorem 1.1]{L} and \cite[Theorem 1.1]{L_AFST}.
\begin{thm}[The Monge--Amp\`ere eigenvalue problem for general bounded convex domains]  
\label{ev_thm}
 Let $\Omega$ be a bounded convex domain in $\R^n$. Define 
\begin{equation}
\label{lam_def}
 \lambda[\Omega] =\inf_{w\in \K} \frac{\int_{\Omega} (-w)\, d\mu_w}{\int_{\Omega} (-w)^{n+1}\,dx}.
 \end{equation}
 Then, the following facts hold.
 \begin{enumerate}
 \item[(i)] (Existence) The infimum in (\ref{lam_def}) is achieved by a 
 nonzero convex solution $w\in C^{0, 1}(\overline{\Omega})\cap C^{\infty}(\Omega)$ to the eigenvalue problem 
  \begin{equation}
   \label{EVP_eq}
 \left\{
 \begin{alignedat}{2}
   \det D^{2} w~&=\lambda[\Omega] |w|^{n} \h~&&\text{in} ~\Omega, \\\
w &=0\h~&&\text{on}~\p \Omega.
 \end{alignedat}
 \right.
\end{equation}
 The constant $\lambda[\Omega]$ is called the Monge--Amp\`ere eigenvalue of $\Omega$ and $w$ is called a Monge--Amp\`ere eigenfunction of $\Omega$.
\item[(ii)] (Uniqueness) If the pair $(\Lambda, \tilde w)$ 
satisfies $\det D^2 \tilde w =\Lambda |\tilde w|^n$ in $\Omega$ where $\Lambda$ is a positive constant and 
$\tilde w\in \K$, then $\Lambda=\lambda[\Omega]$ and $\tilde w=m w$ for some positive constant $m$.
\item[(iii)] (Stability) $\lambda[\cdot]$ is stable with respect to the Hausdorff convergence of the domains: If $\{\Omega_m\}_{m=1}^{\infty}\subset\R^n$ is a sequence of bounded convex domains that 
converges in the Hausdorff distance to a bounded convex domain $\Omega$, then $\lim_{m\rightarrow \infty}\lambda[\Omega_m]=\lambda[\Omega].$
\end{enumerate}
\end{thm}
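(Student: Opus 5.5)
This theorem is quoted from \cite[Theorem 1.1]{L} and \cite[Theorem 1.1]{L_AFST}. If I were to prove it, I would approximate $\Omega$ by smooth, uniformly convex domains and pass to the limit, using Lions' existence theory and Tso's variational characterization \eqref{lamT}.

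\emph{Existence (i).} I would choose smooth, uniformly convex domains $\Omega_m$ converging to $\Omega$ in the Hausdorff distance, for instance $\Omega_m\subset\Omega$ increasing. Let $u_m$ be their Lions eigenfunctions, normalized by $\|u_m\|_{L^\infty}=1$. Since $\Omega_m\subset\Omega$, any $w\in\K(\Omega_m)$ extends by zero to an element of $\K(\Omega)$. Together with comparison with a fixed ball inside and a fixed ball outside $\Omega$ (using the homogeneity $\lambda(t\Omega)=t^{-2n}\lambda(\Omega)$), this gives uniform positive upper and lower bounds on $\lambda(\Omega_m)$.

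Next I would bound $u_m$. An Aleksandrov-type estimate, $|u_m(x)|^n\le C\,\mathrm{dist}(x,\p\Omega_m)\,\mathrm{diam}^{n-1}\mu_{u_m}(\Omega_m)$, gives uniform Hölder control near the boundary. Convexity gives local Lipschitz bounds in the interior. After passing to a subsequence, $u_m\to w$ uniformly and $\lambda(\Omega_m)\to\Lambda$. Weak continuity of Monge--Amp\`ere measures under uniform convergence then shows $\det D^2 w=\Lambda|w|^n$ in the Aleksandrov sense, with $w=0$ on $\p\Omega$ and $\|w\|_\infty=1$.

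Interior smoothness follows from Caffarelli's theory: the right-hand side is positive and continuous in the interior and the boundary data are affine, so $w$ is strictly convex and hence $C^\infty$ by bootstrapping. Global Lipschitz regularity comes from barriers built from supporting hyperplanes, since $\det D^2w$ is bounded.

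To see that $w$ attains the infimum, I would show $\lambda[\Omega]\le R(w)=\Lambda$ and also $\Lambda\le\lambda[\Omega]$. The second inequality uses that any competitor in $\K(\Omega)$ can be approximated by competitors on $\Omega_m$ with Rayleigh quotients converging, by Hausdorff stability of Monge--Amp\`ere measures. This gives $\Lambda=\lambda[\Omega]$.

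\emph{Uniqueness (ii).} This is the main obstacle, because the domain has no smoothness and the functions are only continuous up to the boundary. I would follow Lions' comparison argument. Given $\tilde w$ and $w$, let $t^*=\sup\{t>0: t\tilde w\ge w \text{ in }\Omega\}$. This is finite and positive, which requires comparable boundary decay of $\tilde w$ and $w$. I would get that decay from the equation via Aleksandrov estimates and convex barriers, which should give $|w|\approx|\tilde w|$ near $\p\Omega$.

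Two alternatives are available for closing the argument. One is an integration-by-parts identity of the form
\[\int (-\tilde w)\,d\mu_w\ \ge\ \int (-w)\,(\det D^2\tilde w)^{1/n}(\det D^2 w)^{(n-1)/n}\,dx,\]
a mixed Monge--Amp\`ere inequality. Applied symmetrically, it forces $\Lambda=\lambda[\Omega]$, and the equality case in Minkowski's determinant inequality forces $D^2\tilde w$ to be proportional to $D^2 w$. The delicate point is justifying the integrations by parts near a rough boundary. I would do this on the sublevel sets $\{w<-\epsilon\}$ and let $\epsilon\to0$, using the global Lipschitz bound. The other is the strong maximum principle applied at the contact set of $t^*\tilde w$ and $w$, which yields $\tilde w=mw$.

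\emph{Stability (iii).} Given $\Omega_m\to\Omega$ in the Hausdorff distance, sandwich each $\Omega_m$ between slightly shrunk and enlarged homothetic copies $(1-\epsilon)\Omega$ and $(1+\epsilon)\Omega$ about an interior point, after a translation. Domain monotonicity of $\lambda[\cdot]$, which is immediate from the variational definition \eqref{lam_def} by extension by zero, together with homogeneity then gives $(1+\epsilon)^{-2n}\lambda[\Omega]\le\lambda[\Omega_m]\le(1-\epsilon)^{-2n}\lambda[\Omega]$. Letting $\epsilon\to0$ proves the claim.
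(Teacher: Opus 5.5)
The paper does not prove this theorem; it imports it from \cite{L} and \cite{L_AFST}. Your sketch therefore has to stand on its own, and two load-bearing steps in it are false as written.

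\textbf{Domain monotonicity and the step $\Lambda\le\lambda[\Omega]$.} The zero extension of $w\in\K(\Omega_m)$ to a larger domain is not convex. Along a line leaving $\Omega_m$, the slope of $w$ is positive just inside $\p\Omega_m$ and drops to $0$ outside. So domain monotonicity is not ``immediate from \eqref{lam_def}'', and your proof of (iii) rests entirely on it. The uniform bounds on $\lambda(\Omega_m)$ in (i) also use it, although there Lions' smooth theory could supply monotonicity.

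Monotonicity is true, but it needs eigenfunctions. Suppose $\Omega\Subset\Omega'$ with eigenfunctions $\phi,\psi$. Then $\psi\le -c<0$ on $\overline\Omega$ while $\phi\to 0$ at $\p\Omega$, so $t^*=\inf_\Omega \psi/\phi$ is attained at an interior point. There $t^*\phi-\psi\ge 0$ has a zero minimum, and comparing Hessians gives $\lambda[\Omega']\le\lambda[\Omega]$. This presupposes existence and interior smoothness from (i), so (iii) can only come afterwards.

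Relatedly, the step $\Lambda\le\lambda[\Omega]$ in (i) is where the real content lies, and you do not supply it. Three things stand in the way:
\begin{itemize}
\item the energy $\int|w|\,d\mu_w$ is not continuous under uniform convergence, since steep slopes concentrating at the boundary can add energy in the limit;
\item Tso's characterization on $\Omega_m$ only admits $C^2$ competitors vanishing on $\p\Omega_m$;
\item the natural competitors $w+\epsilon$ on $\{w<-\epsilon\}$ live on nonsmooth sublevel sets, not on the $\Omega_m$, so you need monotonicity or stability there as well.
\end{itemize}

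\textbf{Global Lipschitz regularity.} A bounded $\det D^2 w$ does not give $w\in C^{0,1}(\overline\Omega)$ on a general convex domain; barriers from supporting hyperplanes require uniform convexity. For $n\ge 3$, suppose $\p\Omega$ contains a flat piece $\{x_n=0,\ |x'|<1\}$ with $\Omega$ above it, and $\det D^2 w\ge c>0$ nearby. An $L$-Lipschitz $w$ would satisfy $|w|\le Lt$ on the slab $S_t=\{|x'|<\frac12,\ 0<x_n<t\}$. Testing supporting planes in tangential directions then gives $\p w(S_t)\subset\{|p'|\lesssim Lt,\ |p_n|\le L\}$, a set of volume $\lesssim L^n t^{n-1}$. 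This contradicts $|\p w(S_t)|\ge c|S_t|\gtrsim t$ as $t\to 0$. So the Lipschitz bound for eigenfunctions must exploit that $\lambda[\Omega]|w|^n$ degenerates at $\p\Omega$, for instance by bootstrapping the decay $|w|\lesssim \mathrm{dist}(\cdot,\p\Omega)^{\beta}$ upward from Aleksandrov's $\beta=1/n$.

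\textbf{Uniqueness (ii).} Your first route works and does not even need the Lipschitz bound. On $D_\epsilon=\{w<-\epsilon\}$ the boundary term $\tfrac1n\int_{\p D_\epsilon}(-\tilde w)\,\mathrm{cof}(D^2w)Dw\cdot\nu$ is nonnegative. This yields $\int_{D_\epsilon}(-\tilde w)\,d\mu_w\ge\int_{D_\epsilon}(-w-\epsilon)\,\tfrac1n\mathrm{tr}\big(\mathrm{cof}(D^2w)D^2\tilde w\big)\,dx$ for every $\epsilon$, and monotone convergence handles $\epsilon\to0$ for any $\tilde w\in\K$. The symmetric application works the same way. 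The equality case gives $D^2\tilde w=(\tilde w/w)D^2w$, from which a short extra computation shows $\tilde w/w$ is constant.

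Your second route does not close. The quantity $\inf_\Omega w/\tilde w$ may only be approached at $\p\Omega$, and on a nonsmooth domain there is no Hopf-type argument to rule this out. The strong maximum principle then has no interior contact point to act on.
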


When $\Omega$ is uniformly convex with smooth boundary, by uniqueness, $\lambda(\Omega)$ defined by \eqref{lamT} and  $\lambda[\Omega]$ defined by \eqref{lam_def} must be equal, though the set of competitors for $\lambda(\Omega)$ is strictly contained in the set of competitors $\K$ for $\lambda[\Omega]$.  
We chose the bracket notation $\lambda[\Omega]$ for the Monge--Amp\`ere eigenvalue of a general bounded convex domain $\Omega$ to emphasize that its boundary might have flat parts or corners.

\medskip
Using the stability with respect to the Hausdorff convergence of the Monge--Amp\`ere eigenvalue and the fact that any bounded convex domain can be approximated in the Hausdorff distance by a sequence of smooth, uniformly convex domains, we can immediately extend Salani's Brunn--Minkowski inequality (\ref{lamBM})
to bounded convex domains; see \cite[Theorem 1.3]{L}.
\begin{thm}[The Brunn--Minkowski inequality for the Monge--Amp\`ere eigenvalue]
\label{BMcor}
Let $\Omega_0$ and $\Omega_1$ be bounded convex domains in $\R^n$ and let $\alpha\in (0, 1)$. Then 
\[\lambda\big[(1-\alpha)\Omega_0 + \alpha\Omega_1\big]^{-\frac{1}{2n}}\geq (1-\alpha) \lambda [\Omega_0]^{-\frac{1}{2n}} + \alpha \lambda[\Omega_1]^{-\frac{1}{2n}}.\]
\end{thm}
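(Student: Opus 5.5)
The plan is to deduce Theorem \ref{BMcor} from Salani's inequality \eqref{lamBM} by approximation, using the stability statement in Theorem \ref{ev_thm}(iii).

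\emph{Step 1: approximation.} I will choose sequences $\Omega_0^m$ and $\Omega_1^m$ of bounded convex domains that are uniformly convex, have $C^2$ (even smooth) boundaries, and satisfy $\Omega_0^m\to\Omega_0$ and $\Omega_1^m\to\Omega_1$ in the Hausdorff distance. A standard construction is to mollify the gauge function (or the support function) of $\Omega_i$ and then add a small multiple of $|x|^2$. An alternative is to take a sublevel set $\{f<1\}$, where $f$ is the mollified gauge plus $\varepsilon_m|x|^2$; this is smooth and uniformly convex.

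\emph{Step 2: apply Salani's inequality.} For each $m$, \eqref{lamBM} gives
\[
\lambda(\Omega^m_\alpha)^{-\frac{1}{2n}}\geq (1-\alpha)\lambda(\Omega_0^m)^{-\frac{1}{2n}}+\alpha\lambda(\Omega_1^m)^{-\frac{1}{2n}},
\qquad \Omega^m_\alpha=(1-\alpha)\Omega_0^m+\alpha\Omega_1^m .
\]
For smooth, uniformly convex domains, $\lambda(\cdot)=\lambda[\cdot]$ by the uniqueness part (ii) of Theorem \ref{ev_thm}. The Minkowski combination $\Omega^m_\alpha$ is again a bounded convex domain, so $\lambda[\Omega^m_\alpha]$ is defined. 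There is a subtlety here: Salani's theorem is stated for $C^2$ uniformly convex $\Omega_0,\Omega_1$, and its conclusion concerns $\lambda(\Omega_\alpha)$. If one worries whether $\Omega_\alpha^m$ is itself smooth, the identification $\lambda(\Omega^m_\alpha)=\lambda[\Omega^m_\alpha]$ still holds in any case through (ii). Indeed, Minkowski sums of $C^2_+$ bodies are $C^2_+$, since their support functions add and the inverse second fundamental forms add.

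\emph{Step 3: pass to the limit.} The Hausdorff distance satisfies
\[
d_H(A+B,\,A'+B')\le d_H(A,A')+d_H(B,B'),
\]
and it scales under dilations. Hence $\Omega_\alpha^m\to\Omega_\alpha$ in the Hausdorff distance. Applying the stability statement (iii) to all three sequences and letting $m\to\infty$ yields the inequality, because all the eigenvalues involved are positive and finite, so the map $t\mapsto t^{-1/2n}$ is continuous at the limits.

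I expect the main obstacle to be Step 1. One must make sure the approximants are genuinely $C^2$ and uniformly convex, and that they converge in the Hausdorff distance to the open domains. The rest is a direct application of the earlier results.
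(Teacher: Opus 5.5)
Your argument is correct, but it is not the proof given in this paper. It is essentially the approximation proof from \cite{L} that the paper recalls in its introduction: Salani's inequality \eqref{lamBM} for smooth uniformly convex approximants, continuity of Minkowski combinations in the Hausdorff metric, and the stability in Theorem \ref{ev_thm}(iii).

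The paper instead proves Theorem \ref{BMcor} directly on the possibly nonsmooth domains. It takes the normalized eigenfunctions $u_0,u_1$ of $\Omega_0,\Omega_1$ themselves and uses their infimal convolution $u_\alpha$ as a competitor in \eqref{lam_def}. It bounds $\int_{\Omega_\alpha}|u_\alpha|^{n+1}\,dx\ge 1$ by Pr\'ekopa--Leindler (Lemma \ref{Sal13}). It bounds the Monge--Amp\`ere energy from above by its sublinearity under infimal convolution (Proposition \ref{obs4}). That sublinearity rests on the subdifferential inclusions of Propositions \ref{obs1} and \ref{obs2}, which replace Salani's identities that fail for non-strictly convex domains. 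This gives $\lambda[\Omega_\alpha]\le(1-\alpha)\lambda[\Omega_0]+\alpha\lambda[\Omega_1]$, and rescaling to $\lambda[\Omega_0']=\lambda[\Omega_1']=1$ then yields the $(-\frac{1}{2n})$-concavity.

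Your route is shorter given the literature, but it uses Salani's theorem and the Hausdorff stability as black boxes. More importantly, it is blind to the equality case. Strict inequalities for the approximants can degenerate into equality in the limit, so nothing about Theorem \ref{eqthm} can be extracted from it. The paper avoids approximation precisely so that equality in Theorem \ref{BMcor} forces equality in Pr\'ekopa--Leindler for the actual eigenfunctions, which yields the homothety.

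A minor remark on your Step 2: you do not need smoothness of $\Omega^m_\alpha$. Any competitor in \eqref{lamT}, and Lions' eigenfunction, lies in $\K$ with the same Rayleigh quotient, so $\lambda[\Omega^m_\alpha]\le\lambda(\Omega^m_\alpha)$. That is the only direction the left-hand side requires. The identification $\lambda(\cdot)=\lambda[\cdot]$ is needed only for $\Omega_0^m,\Omega_1^m$, which you chose smooth.
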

Theorem \ref{BMcor} is the Monge--Amp\`ere analogue of the Brunn--Minkowski inequality for the eigenvalue of the Laplace operator on bounded domains established by Brascamp--Lieb \cite[Theorem 6.2]{BL} (see also Borell \cite[Theorem 3.2]{Bo} for a different proof). 

\medskip
The case of equality in Theorem \ref{BMcor} was left open in \cite{L}. 
Here, we resolve this issue.
\begin{thm} [Equality case of the Brunn--Minkowski inequality ]
\label{eqthm}
Equality holds in Theorem \ref{BMcor} for $\alpha\in (0, 1)$ if and only if $\Omega_1$ is homothetic to $\Omega_0$. 
\end{thm}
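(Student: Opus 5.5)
The "if" direction is immediate. If $\Omega_1=\beta\Omega_0+b$, then $\Omega_\alpha=((1-\alpha)+\alpha\beta)\Omega_0+\alpha b$. The eigenvalue is translation invariant and, by uniqueness in Theorem \ref{ev_thm}(ii) applied to rescaled eigenfunctions, positively homogeneous of degree $-2n$. Hence $\lambda[\cdot]^{-1/(2n)}$ is homogeneous of degree $1$, and both sides of Theorem \ref{BMcor} equal $((1-\alpha)+\alpha\beta)\lambda[\Omega_0]^{-1/(2n)}$.

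For the "only if" direction I would not approximate by smooth domains, since equality is not stable under Hausdorff limits. Instead I would rerun the Prékopa--Leindler/Borell-type argument behind Salani's proof directly on the singular eigenfunctions. By homogeneity, first rescale so that $\lambda[\Omega_0]=\lambda[\Omega_1]$; equality then says $\lambda[\Omega_\alpha]=\lambda[\Omega_0]=\lambda[\Omega_1]$. Let $u_0,u_1$ be the convex eigenfunctions from Theorem \ref{ev_thm}(i), normalized so that $\min u_0=\min u_1=-1$. Form the $(-\tfrac12)$-concave-type (Borell) combination adapted to the Monge--Amp\`ere scaling: for $x\in\Omega_\alpha$ set
\[
w(x)=\sup\Big\{-\big[(1-\alpha)(-u_0(x_0))^{q}+\alpha(-u_1(x_1))^{q}\big]^{1/q}: x=(1-\alpha)x_0+\alpha x_1\Big\}
\]
with the exponent $q$ dictated by the power $1/(2n)$ (equivalently, take the infimal convolution of the convex functions $-(-u_i)^{q}$, whose level sets are Minkowski combinations). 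Then $w\in\K$ on $\Omega_\alpha$. The key pointwise estimate is the Salani computation: at interior points where the sup is attained at interior $x_0,x_1$ with matching gradients, smoothness of $u_i$ in $\Omega_i$ together with Minkowski's determinant inequality gives $\det D^2w\ge \lambda[\Omega_0]|w|^n$ in the viscosity/Aleksandrov sense. I would then show that $R(w)\le\lambda[\Omega_0]$, or compare directly, using a comparison principle for $\det D^2 v=\lambda|v|^n$. Since $\lambda[\Omega_\alpha]=\lambda[\Omega_0]$, $w$ is a subsolution with the same eigenvalue. I would show that this forces $w$ to be an eigenfunction, either through the variational characterization \eqref{lam_def} (where $w$ attains the infimum) or by a maximum-principle argument against the eigenfunction $u_\alpha$ of $\Omega_\alpha$ (take the largest $m$ with $mu_\alpha\le w$ and use the strong maximum principle). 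Then Theorem \ref{ev_thm}(ii) gives $w=mu_\alpha$, and in particular equality holds in every use of Minkowski's determinant inequality.

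Equality in Minkowski's inequality $\det(A+B)^{1/n}\ge\det A^{1/n}+\det B^{1/n}$ forces $D^2$ of the transformed functions at the paired points $x_0,x_1$ to be proportional. Combined with the matching-gradient condition, this shows that the map $x_0\mapsto x_1$ (the gradient correspondence between the transformed $u_0,u_1$) has derivative a multiple of the identity, and a fixed multiple once the normalizations agree. So $x_1=\beta x_0+b$, and the transformed $u_1$ is the pullback of the transformed $u_0$ under this dilation. Letting $x_0$ sweep $\Omega_0$ and using that the gradient maps of these convex functions cover their domains shows that $\Omega_1=\beta\Omega_0+b$.

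The main obstacle is the step "subsolution with the same eigenvalue implies eigenfunction" together with the rigidity in the equality case. Both require boundary control on non-smooth domains, because $u_i$ is only Lipschitz up to $\partial\Omega_i$. I would handle the first by working on compactly contained sublevel sets $\{w<-\varepsilon\}$, where everything is smooth. For the second I would use that $u_i$ is strictly convex and smooth in the interior, which follows from Theorem \ref{ev_thm}(i) since $\det D^2u_i>0$ there. The rigidity is then an interior analytic identity, which I would propagate to the boundary by continuity and letting $\varepsilon\to0$.
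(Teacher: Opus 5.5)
Your ``if'' direction is the same as the paper's. The ``only if'' argument, however, rests on a pointwise estimate that is stated in the wrong direction and, more seriously, is false on a set you cannot discard.

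Take $q=1$, so that $w$ is the paper's infimal convolution $u_\alpha$. This is the natural choice: $-u_i$ is already concave, and for $q\neq1$ gradient terms spoil the Minkowski step. Note also that your displayed $\sup$ should be an $\inf$ to agree with the infimal-convolution description.

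Extend the Legendre transforms to all $p\in\R^n$. Then $w^*=(1-\alpha)u_0^*+\alpha u_1^*$. Where the optimal decomposition has $x_0\in\Omega_0$ and $x_1\in\Omega_1$ (the set $S_1$ in $p$-space), you get $D^2w(x)^{-1}=(1-\alpha)D^2u_0(x_0)^{-1}+\alpha D^2u_1(x_1)^{-1}$. Minkowski's inequality plus the harmonic--arithmetic mean inequality then give $\det D^2w\le\lambda|w|^n$. That upper bound is what $R(w)\le\lambda$ requires. The lower bound you wrote only gives $R(w)\ge\lambda=\lambda[\Omega_\alpha]$, which is vacuous.

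On the sets $S_2,S_3$, where the optimal decomposition has $x_1\in\p\Omega_1$ or $x_0\in\p\Omega_0$, the gradients do not match. There $u_1^*$ coincides with the support function of $\Omega_1$, whose Hessian is degenerate. The best you get is $\det D^2w\le(1-\alpha)^{-n}\lambda|u_0(x_0)|^n$, while $|w(x)|=(1-\alpha)|u_0(x_0)|$. This bound is sharp. If $\Omega_1$ is a polytope and $v$ is a vertex, then on the part of $\Omega_\alpha$ paired with $v$ one has $w(x)=(1-\alpha)u_0\big((x-\alpha v)/(1-\alpha)\big)$. Hence the density of $\mu_w$ there is $(1-\alpha)^{-2n}\lambda|w|^n>\lambda|w|^n$.

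These sets generally have positive measure, and nothing in your argument shows they are null in the equality case. So $w$ is in general neither a sub- nor a supersolution. Neither of your two routes to ``$w$ is an eigenfunction'' is therefore available: attaining the infimum in \eqref{lam_def} fails, and so does the touching argument against the eigenfunction of $\Omega_\alpha$. The rigidity step has the same hole. Equality in Minkowski's inequality only controls $x_0\in Du_0^*(S_1)$, so ``letting $x_0$ sweep $\Omega_0$'' presupposes $|S_2|=|S_3|=0$, which is essentially what has to be proved.

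The paper gets around this by never using second-order information.
\begin{enumerate}
\item Via the change of variables of Lemma \ref{change_lem} and Propositions \ref{obs1}--\ref{obs2}, the energy integrand of $u_\alpha$ in $p$-space is $(1-\alpha)u_0(Du_0^*(p))+\alpha u_1(Du_1^*(p))$ on $S_1$, and only one of these two terms on $S_2$ or $S_3$. This gives the sublinearity of Proposition \ref{obs4} with no Hessians at all.
\item The entire deficit is carried by the denominator, through $|u_\alpha(x)|\ge|u_0(x_0)|^{1-\alpha}|u_1(x_1)|^{\alpha}$ and Pr\'ekopa--Leindler. For this to close, the eigenfunctions must be normalized by $\int_{\Omega_i}|u_i|^{n+1}\,dx=1$, not by $\min u_i=-1$. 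Otherwise Pr\'ekopa--Leindler produces a geometric mean of the two $L^{n+1}$ masses, which falls short of the arithmetic mean appearing in the numerator.
\item Equality in $\lambda[\Omega_\alpha]\le(1-\alpha)\lambda[\Omega_0]+\alpha\lambda[\Omega_1]$ then forces equality in Pr\'ekopa--Leindler. Dubuc's characterization of that equality case gives the homothety directly, with no interior rigidity analysis.
\item The $\lambda^{-1/(2n)}$ form and its equality case follow by applying this to $\lambda[\Omega_i]^{1/(2n)}\Omega_i$ with an adjusted weight $\alpha'$.
\end{enumerate}
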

Theorems \ref{BMcor} and \ref{eqthm} completely resolve the question raised by Salani in  \cite[p. 85]{Sal}.

\medskip
\noindent
To prove Theorem \ref{eqthm}, we first directly prove Theorem \ref{BMcor} without using approximations.
Our proof follows the general strategy in Salani \cite{Sal} using the Pr\'ekopa--Leindler inequality and the infimum convolution of convex functions to create a suitable competitor for the Monge--Amp\`ere eigenvalue
$\lambda\big[(1-\alpha)\Omega_0 + \alpha\Omega_1\big]$ from the eigenfunctions of $\Omega_0$ and $\Omega_1$. These domains are no longer assumed to be strictly convex so some assertions in \cite{Sal} might not hold anymore. We need to find appropriate replacements. The key examples are:
\begin{enumerate}
\item Proposition \ref{obs1} replaces Proposition 9 in \cite{Sal} where the inclusion was in fact an equality. 
\item The sublinearity of the Monge--Amp\`ere energy $\int_\Omega |u| d\mu_u$ under the infimum convolution of convex functions in Proposition \ref{obs4} replaces Proposition 14 in \cite{Sal} where the linearity was established. 
\end{enumerate}
Once Theorem \ref{BMcor}  is proved with the above replacements, the equality case can be deduced from the equality case of the Pr\'ekopa--Leindler inequality. Interestingly and differently from \cite{Sal}, our proof does not use any property on the facial structure of convex bodies.

\medskip
The rest of this note is devoted to presenting self-contained proofs of Theorems \ref{BMcor} and \ref{eqthm}. 
Section \ref{sec_inf} will discuss the infimum convolution of convex functions and its effect on the Monge--Amp\`ere energy.  The proofs of Theorems \ref{BMcor} and \ref{eqthm} will be given in Section \ref{sec_pf}.

\section{Monge--Amp\`ere measure and infimum convolution of convex functions}
\label{sec_inf}
In this section, we study the infimum convolution of convex functions and its effect on the Monge--Amp\`ere energy.
\subsection{Monge--Amp\`ere measure and Legendre transform}
For a convex function \(u:\Omega \to \R\) on an open set $\Omega\subset\R^n$, we define
 the subdifferential of $u$  at $x\in\Omega$ by
 \[
\partial u (x):=\Big\{p\in \R^{n}\,:\, u(y)\ge u(x)+p\cdot (y-x)\quad \text{for all } y \in \Omega\Big\}.
\]
Then $\p u(x)\neq \emptyset$ for each $x\in\Omega$. For a Borel set $E\subset\Omega$, we define
\[\p u(E)=\bigcup_{x\in E} \p u(x).\]
Note that $u$ is globally Lipschitz if and only if its total subdifferential  $\p u(\Omega)$ is bounded.

We use $|E|$ to denote the Lebesgue measure of a measurable set $E\subset\R^n$. Below is a precise definition of the Monge--Amp\`ere measure of a convex function  \(u:\Omega \to \R\); see \cite[Definition 3.1]{Lbook}.
\begin{defn}[Monge--Amp\`ere measure]
\label{MAdef}
Let $u:\Omega\rightarrow \R$ be a convex function on an open set $\Omega\subset\R^n$. The Monge--Amp\`ere measure, $\mu_u$, associated with $u$ is defined by
\[\mu_u(E) = |\p u(E)|\quad
\text{for each Borel set } E\subset\Omega.\]
If $u\in C^{2}(\Omega)$, then 
$
\mu_u=\det D^{2} u(x)\,dx$ in $\Omega$.
\end{defn}
The numerator in \eqref{lam_def} is the Monge--Amp\`ere energy \[\int_\Omega (-w) \,d\mu_w\] of the convex function $w\in \K$.

\medskip
The following notion will be very useful in our analysis.
\begin{defn}[Legendre transform]
For a convex function $u\in C(\overline{\Omega})$ on a bounded convex domain $\Omega$ in $\R^n$, let $u^\ast:\p u(\Omega)\to\R$ be its Legendre transform:
\[u^\ast(p)=\max\Big\{p\cdot x- u(x): x\in \overline{\Omega}\Big\}, \quad p\in \p u(\Omega).\]
\end{defn}
As a maximum of a family of linear functions, $u^*$ is convex.  If $p\in \p u(x)$ where $x\in\Omega$, then $x\in \p u^*(p)$ and vice versa.  
Since $u^*$ is locally Lipschitz in $\p u(\Omega)$, by the Rademacher theorem, the gradient $Du^*(p)$ exists for $p\in \p u(\Omega)$  almost everywhere in the sense of Lebesgue measure.  If $u$ is strictly convex, then $\p u^*(p)$ is a singleton for each $p\in \p u(\Omega)$ so $u^*$ is differentiable everywhere in $\p u(\Omega)$; consequently, $u^*\in C^1(\p u(\Omega))$ and $\p u^*(p) =\{Du^*(p)\}$ for each $p\in \p u(\Omega)$ (see \cite[Theorem 2.38]{Lbook}).

\medskip
We can verify that for a (not necessary strictly) convex function $u$, the Monge--Amp\`ere measure $\mu_u$ is the push-forward by the gradient  map $Du^*$ of the  Lebesgue measure $dx$ (see also Villani \cite[Lemma 4.6]{V}).  This means that 
\begin{equation}
\label{MALeg}
|(Du^*)^{-1}(E)|=|\p u(E)|\quad\text{for each Borel set } E\subset\Omega.\end{equation}

\medskip
Indeed, if $p\in (Du^*)^{-1}(x)$ where $x\in E$, then $x= Du^*(p)\subset \p u^* (p)$, so $p\in \p u(x) \subset \p u(E)$. This shows that
\begin{equation} \label{MALeg1} (Du^*)^{-1}(E)\subset \p u(E).\end{equation}
From this argument, we see that if $p\in \p u(E)
\setminus (Du^*)^{-1}(E)$, then $p$ is a point of nondifferentiability of $u^*$. The set of such points has Lebesgue measure zero. Hence
\begin{equation} \label{MALeg2}|\p u(E)
\setminus (Du^*)^{-1}(E)|=0.
\end{equation}
From \eqref{MALeg1} and \eqref{MALeg2}, we obtain \eqref{MALeg} as asserted.

\medskip
From  \eqref{MALeg}, we have the following change of variables formula.
\begin{lem} 
\label{change_lem}
Let $u: \Omega\to\R$ be a convex function on a bounded convex domain $\Omega$ in $\R^n$, and let $u^\ast:\p u(\Omega)\to\R$ be its Legendre transform. Then
\begin{equation}
\label{change_var}
\int_{\Omega} f\, d\mu_u= \int_{\p u(\Omega)} f(D u^\ast (p))\, dp\quad \text{for all } f\in C(\Omega).\end{equation}
\end{lem}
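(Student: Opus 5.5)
The plan is to prove \eqref{change_var} first for indicator functions, then extend by linearity and approximation. Note that $f\in C(\Omega)$ may be unbounded near $\p\Omega$, so both sides are understood for $f\geq 0$ as integrals in $[0,\infty]$. For general $f$ they are understood whenever they make sense; in the application $f=-u\ge 0$ is bounded. The map $Du^*$ is defined only on the full-measure set $D\subset\p u(\Omega)$ of differentiability points of $u^*$ (Rademacher). On $D$ we have $Du^*(p)\in\p u^*(p)$. We must also check that $Du^*(p)$ lands in $\Omega$, so that $f(Du^*(p))$ makes sense. For $p\in\p u(x)$ with $x\in\Omega$, the point $x$ lies in $\p u^*(p)$. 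At a differentiability point, $\p u^*(p)=\{Du^*(p)\}$ as long as $p$ is interior to $\p u(\Omega)$. The boundary of the convex-like set $\p u(\Omega)$ is negligible, or more simply one can restrict to those $p$ at which $\p u^*(p)$ is a singleton. So $Du^*(p)=x\in\Omega$ for a.e.\ $p$.

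For a Borel set $E\subset\Omega$ and $f=\chi_E$, the right-hand side is $|\{p\in D: Du^*(p)\in E\}|=|(Du^*)^{-1}(E)|$. By \eqref{MALeg} this equals $|\p u(E)|=\mu_u(E)$, which is the left-hand side. The measurability of $(Du^*)^{-1}(E)$ follows because $Du^*$ is a Borel map on $D$: it is a pointwise limit of difference quotients of the continuous function $u^*$. Thus \eqref{MALeg} says exactly that the push-forward $(Du^*)_\#(dp\llcorner\p u(\Omega))$ coincides with $\mu_u$ as Borel measures on $\Omega$.

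By linearity, \eqref{change_var} then holds for nonnegative simple Borel functions. For $f\in C(\Omega)$ with $f\ge0$, choose simple functions $0\le s_k\uparrow f$ and apply monotone convergence on both sides, noting that $s_k\circ Du^*\uparrow f\circ Du^*$ a.e. For general $f$, split $f=f^+-f^-$ and subtract; this is legitimate whenever one of the two integrals is finite, and the same then holds for the other side by the nonnegative case. Equivalently, this is the abstract change of variables formula $\int g\,d(T_\#\nu)=\int g\circ T\,d\nu$.

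I expect the main obstacle to be the technical point that $Du^*$ is only defined a.e.\ and must map into $\Omega$ rather than onto $\p\Omega$. This is handled by the argument in the first paragraph together with \eqref{MALeg2}. Everything else is routine measure theory.
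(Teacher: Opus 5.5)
Your proposal is correct and follows the paper's route: the lemma is exactly the statement that $\mu_u$ is the push-forward of Lebesgue measure on $\p u(\Omega)$ under $Du^*$, i.e.\ \eqref{MALeg}, and you spell out the standard indicator/simple-function/monotone-convergence extension that the paper leaves implicit. Your check that $Du^*(p)\in\Omega$ for a.e.\ $p$ is the same observation as \eqref{MALeg1}--\eqref{MALeg2}. Your aside that the boundary of $\p u(\Omega)$ is negligible is unjustified as stated, but it is also unnecessary, since restricting to points where $\p u^*(p)$ is a singleton already suffices.
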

Note that \eqref{change_var} coincides with \cite[Equation (19)]{Sal} for the case of $u$ being strictly convex.

\medskip
\noindent
Throughout the rest of this section, we assume 
   \begin{equation}
\label{Sta}
  \left\{\begin{alignedat}{1}
  &\mbox{$\Omega_0$ and $\Omega_1$ are bounded convex domains in $\R^n$},\\
 &  \Omega_\alpha =(1-\alpha)\Omega_0 +\alpha \Omega_1 \quad \text{where } \alpha\in (0, 1),\\
&  \mbox{$u_i\in C(\overline{\Omega_i})$ is a strictly convex function 
in $\Omega_i$ that vanishes on $\p\Omega_i$ for $i=0, 1$}. &&
  \end{alignedat}\right.
\end{equation}

\subsection{Infimum convolution of two convex functions and Monge--Amp\`ere energy}
A key notion in the proof of Theorem \ref{BMcor} is the {\it infimum convolution of two convex functions}.
\begin{defn}[Infimum convolution of two convex functions] Assume \eqref{Sta}. We define the infimum convolution $u_\alpha$ on $\Omega_\alpha$ of $u_0$ and $u_1$ by
\begin{equation}
\label{infc}
u_\alpha(x) =\min\Big\{ (1-\alpha)u_0(x_0) +\alpha u_1(x_1):  x_0\in  \overline{\Omega_0}, x_1\in  \overline{\Omega_1}, x= (1-\alpha)x_0 +\alpha x_1\Big\}. \end{equation}
\end{defn} 
When $u_0$ and $u_1$ are  Monge--Amp\`ere eigenfunctions of $\Omega_0$ and $\Omega_1$, respectively, we will use the infimum convolution $u_\alpha$ as a competitor in the variational formula \eqref{lam_def}
for the Monge--Amp\`ere eigenvalue $\lambda[\Omega_\alpha]$. Therefore, we need to understand its fine properties. 

\medskip
We recall \cite[Lemma 8]{Sal} which uses only the strict convexity of the functions $u_0$ and $u_1$ but not of the domains.
\begin{lem} 
\label{Sal8}
Assume \eqref{Sta}. Let $u_\alpha$ be as in \eqref{infc}. Then, the following statements hold.
\begin{enumerate}
\item For every $x\in \overline{\Omega_\alpha}$, there exists a unique couple of points $(x_0, x_1)\in \overline{\Omega_0} \times \overline{\Omega_1}$ such that
\[x= (1-\alpha)x_0 +\alpha x_1\quad \text{and}\quad u_\alpha(x)=(1-\alpha)u_0(x_0) +\alpha u_1(x_1).\]
\item $u_\alpha \in  C(\overline{\Omega_\alpha})$ is strictly convex with the following property:
\begin{equation}
\label{uastrict}
u_\alpha = 0\quad \text{on }\p\Omega_\alpha,\quad \text{and } u_\alpha < 0\quad \text{in }\Omega_\alpha.\end{equation}
\end{enumerate}
\end{lem}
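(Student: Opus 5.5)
The plan is to prove the two items in turn, working straight from the definition \eqref{infc} and using only the strict convexity of $u_0,u_1$ together with continuity and compactness.

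\emph{Existence of a minimizer in (1).} Fix $x\in\overline{\Omega_\alpha}$. The admissible set $\{(x_0,x_1)\in\overline{\Omega_0}\times\overline{\Omega_1}: (1-\alpha)x_0+\alpha x_1=x\}$ is compact. It is also nonempty, because $\overline{\Omega_\alpha}=(1-\alpha)\overline{\Omega_0}+\alpha\overline{\Omega_1}$, the right-hand side being compact and containing $\Omega_\alpha$. The function $(1-\alpha)u_0(x_0)+\alpha u_1(x_1)$ is continuous, so the minimum is attained.

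\emph{Uniqueness in (1).} Suppose two distinct minimizing couples $(x_0,x_1)\neq(y_0,y_1)$ exist. From $(1-\alpha)x_0+\alpha x_1=(1-\alpha)y_0+\alpha y_1$ we get $x_0\neq y_0$ and $x_1\neq y_1$. Their midpoint is admissible, since both the constraint and the convex sets are preserved under averaging. I would then use strict convexity of $u_i$ on the segment. Some care is needed when endpoints lie on $\p\Omega_i$: strict convexity is assumed only in $\Omega_i$, but the open segment between two distinct points of $\overline{\Omega_i}$ lies in $\Omega_i$ unless it lies in $\p\Omega_i$, where $u_i\equiv 0$. I plan to handle this by showing that a continuous convex function that is strictly convex in $\Omega_i$ and vanishes on $\p\Omega_i$ satisfies
\[
u_i\big(\tfrac{a+b}{2}\big)<\tfrac{u_i(a)+u_i(b)}{2}
\]
for $a\ne b$ in $\overline{\Omega_i}$, unless the whole segment $[a,b]$ lies in $\p\Omega_i$. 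This should follow by restricting to the segment: a convex function of one variable that is affine on a subsegment meeting the interior gives a contradiction. Both minimizing couples have value $u_\alpha(x)$. Hence strictness in at least one index gives a smaller value at the midpoint, a contradiction. The remaining case, where both segments $[x_0,y_0]$ and $[x_1,y_1]$ lie in the boundaries, gives value $0$; but the minimum is $0$ only if $x\in\p\Omega_\alpha$, as argued below. So the degenerate case needs an extra argument, and this boundary bookkeeping is the main obstacle. I would first try to show directly that if $x=(1-\alpha)x_0+\alpha x_1$ with a flat segment in each boundary, then $x$ admits another decomposition with an interior point and hence negative value. Failing that, I would accept non-strict uniqueness at boundary points only if the paper's later use permits it.

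\emph{Item (2).} Convexity of $u_\alpha$ follows by averaging minimizing couples at two points $x,y$, which gives an admissible couple for the averaged point. Strict convexity follows by combining this with the uniqueness and strictness argument above: equality would force the minimizing couples of $x$ and $y$ to have $x_0=y_0$ and $x_1=y_1$, hence $x=y$.

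For the sign, if $x\in\Omega_\alpha$, write $x=(1-\alpha)x_0+\alpha x_1$ with points of the open sets. Then $u_\alpha(x)\le(1-\alpha)u_0(x_0)+\alpha u_1(x_1)<0$, since $u_i<0$ inside by convexity, strictness and zero boundary values. If $x\in\p\Omega_\alpha$, every decomposition has $x_0\in\p\Omega_0$ and $x_1\in\p\Omega_1$. Otherwise, an interior $x_0$ with a small ball $B\subset\Omega_0$ gives $(1-\alpha)B+\alpha x_1\subset\Omega_\alpha$, which is a neighborhood of $x$, a contradiction. Hence $u_\alpha(x)=0$.

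Continuity on $\overline{\Omega_\alpha}$ is the last point. Lower semicontinuity comes from compactness: take limits of minimizing couples. Upper semicontinuity in the interior follows from convexity. At the boundary, $u_\alpha\le0$ together with lower semicontinuity gives $\liminf u_\alpha\ge u_\alpha(x)=0\ge\limsup u_\alpha$.
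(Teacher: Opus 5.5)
The step that fails is strict convexity in (2). Your strictness lemma for $u_i$ has an exception: segments contained in $\partial\Omega_i$. In (1) you could rule it out only because the constraint gives $x_0\neq y_0\iff x_1\neq y_1$, and two boundary segments force the value $0$, which is impossible for $x\in\Omega_\alpha$. For two distinct points $x\neq y$, nothing ties $x_0-y_0$ to $x_1-y_1$. Equality in the convexity inequality only yields, for each $i$, $x_i=y_i$ or $[x_i,y_i]\subset\partial\Omega_i$. The combination $x_1=y_1\in\Omega_1$, $[x_0,y_0]\subset\partial\Omega_0$ is compatible with $x,y\in\Omega_\alpha$.

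This combination actually occurs, so the claim can fail once $\partial\Omega_0$ has flat pieces. Take $\Omega_0=(-1,1)^2$, $\Omega_1$ the unit disk, $\alpha=\frac12$, $u_1(y)=|y|^2-1$, and $u_0$ as in \eqref{Sta} with Lipschitz constant $L<1$ (e.g.\ a small multiple of the eigenfunction of $(-1,1)^2$ from Theorem~\ref{ev_thm}). Fix $|s|\le\frac12$. Every decomposition $(s,-\frac34)=\frac12z_0+\frac12z_1$ has the form $z_0=(2s+a,-1+d)$, $z_1=(-a,-\frac12-d)$ with $d\ge0$ and $(2s+a,-1)\in\partial\Omega_0$. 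Hence $u_0(z_0)\ge-Ld$ and
\[
\tfrac12u_0(z_0)+\tfrac12u_1(z_1)\ge-\tfrac38+\tfrac12\big(a^2+(1-L)d+d^2\big),
\]
with equality at $a=d=0$. Thus $u_\alpha\equiv-\frac38$ on the segment $\{(s,-\frac34):|s|<\frac12\}\subset\Omega_\alpha$, so $u_\alpha$ is not strictly convex. The paper's one-line justification (``follows from that of $u_0$ and $u_1$ and the representation in part (i)'') has exactly the same hole. What survives, and suffices later, is convexity of $u_\alpha$ together with one fact: $p\in\partial u_\alpha(x)\cap\partial u_\alpha(y)$ with $x\neq y$ only for $p$ in a Lebesgue-null set (nondifferentiability points of $u_\alpha^*$). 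So the ``unique $x_\alpha$'' used before Proposition~\ref{obs1} exists for a.e.\ $p$.

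For (1), your argument is correct for $x\in\Omega_\alpha$. The repair you propose at $x\in\partial\Omega_\alpha$ cannot work, since you show yourself that every decomposition of a boundary point lies in $\partial\Omega_0\times\partial\Omega_1$. Uniqueness is in fact false there. For $\Omega_0=\Omega_1=(0,1)^2$, $\alpha=\frac12$, $x=(\frac12,0)$, every couple $((t,0),(1-t,0))$, $t\in[0,1]$, is a minimizer. Your fallback is the right call, because (1) is only ever used for $x\in\Omega_\alpha$. The paper's sketch misses this by calling $(x_0,x_1)\mapsto(1-\alpha)u_0(x_0)+\alpha u_1(x_1)$ strictly convex on the closed fiber.

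Existence, convexity, continuity and the sign are fine. For $u_\alpha=0$ on $\partial\Omega_\alpha$, your statement that every admissible couple lies in $\partial\Omega_0\times\partial\Omega_1$ is exactly what is needed. The inclusion $\partial\Omega_\alpha\subset(1-\alpha)\partial\Omega_0+\alpha\partial\Omega_1$ quoted in the paper only produces one such couple.
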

\begin{proof} We sketch the proof for the reader's convenience. 

Fix $x\in \overline{\Omega_\alpha}$. Since $u_0$ and $u_1$ are strictly convex, the function $(x_0, x_1)\mapsto (1-\alpha)u_0(x_0) +\alpha u_1(x_1)$ is strictly convex in the convex 
set $\big\{(x_0, x_1) \in  \overline{\Omega_0} \times \overline{\Omega_1}: x= (1-\alpha)x_0 +\alpha x_1\big\}$. 
Thus, the minimum in \eqref{infc} is uniquely attained. This prove part (i).

\medskip
We now prove part (ii). Clearly, $u_\alpha \in  C(\overline{\Omega_\alpha})$. The strict convexity of $u_\alpha$ follows from that of $u_0$ and $u_1$ and the representation in part (i). It remains to verify that 
$u_\alpha = 0$ on $\p\Omega_\alpha$, but this follows from the definition and the fact that $\p\Omega_\alpha\subset (1-\alpha)\p\Omega_0 +\alpha \p\Omega_1$.
\end{proof}
Our main observation is that {\it the total subdifferential of the infimum convolution of two convex functions is contained in the union of their total subdifferentials while it contains their intersection}. 
Propositions \ref{obs1} and \ref{obs2} will present this result together with additional properties. 

 Before stating a more descriptive version of this result, we begin with some preliminaries. Assume $p\in \p u_{\alpha}(\Omega_\alpha)$. Then, by the strict convexity of $u_\alpha$, $p\in \p u_{\alpha}(x_\alpha)$ for a unique $x_\alpha\in\Omega_\alpha$. By Lemma \ref{Sal8}, 
there exists a unique couple of points $(x_0, x_1)\in \overline{\Omega_0} \times \overline{\Omega_1}$ such that
\[x_\alpha= (1-\alpha)x_0 +\alpha x_1\quad \text{and}\quad u_\alpha(x_\alpha)=(1-\alpha)u_0(x_0) +\alpha u_1(x_1).\]
Due to \eqref{uastrict}, we have $u_\alpha(x_\alpha)<0$ so $(x_0, x_1)\not\in \p\Omega_0\times \p\Omega_1$.  
\begin{prop} 
\label{obs1}
Assume \eqref{Sta}. Let $u_\alpha$ be as in \eqref{infc}. Let $p\in \p u_{\alpha}(x_\alpha)$ where $x_\alpha\in\Omega_\alpha$ with a unique couple of points $(x_0, x_1)\in \overline{\Omega_0} \times \overline{\Omega_1}\setminus  (\p\Omega_0\times \p\Omega_1)$ such that
\[x_\alpha= (1-\alpha)x_0 +\alpha x_1\quad \text{and}\quad u_\alpha(x_\alpha)=(1-\alpha)u_0(x_0) +\alpha u_1(x_1).\]
\begin{enumerate}
\item  If $x_0\in\Omega_0$, then 
\[p\in \p u_0(x_0)\quad\text{and }x_0= Du_0^*(p).\]
 If $x_1\in\Omega_1$, then 
\[p\in \p u_1(x_1)\quad\text{and }x_1= Du_1^*(p).\]
Consequently,
\[\p u_{\alpha}(\Omega_\alpha) \subset \p u_{0}(\Omega_0)\cup  \p u_{1}(\Omega_1).\]
\item If $x_1\in\p\Omega_1$ then $p\not \in \p u_{1}(\Omega_1)$. 
Similarly, if $x_0\in\p\Omega_0$ then $p\not \in \p u_{0}(\Omega_0)$. 
\end{enumerate}
\end{prop}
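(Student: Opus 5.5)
The plan is to prove everything by perturbing the minimizing couple $(x_0,x_1)$ in the definition \eqref{infc} and comparing with the supporting hyperplane of $u_\alpha$ at $x_\alpha$ given by $p$.

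For part (i), suppose $x_0\in\Omega_0$. For any $y\in\Omega_0$ and small $t>0$, consider the point $x_\alpha+(1-\alpha)t(y-x_0)$. This is $(1-\alpha)(x_0+t(y-x_0))+\alpha x_1$. Since $x_0+t(y-x_0)\in\overline{\Omega_0}$, the definition \eqref{infc} and the condition $p\in\p u_\alpha(x_\alpha)$ give
\[
(1-\alpha)u_0(x_0+t(y-x_0))+\alpha u_1(x_1)\ \ge\ u_\alpha\big(x_\alpha+(1-\alpha)t(y-x_0)\big)\ \ge\ u_\alpha(x_\alpha)+(1-\alpha)t\,p\cdot(y-x_0).
\]
Here we need $x_\alpha+(1-\alpha)t(y-x_0)\in\Omega_\alpha$, which holds for small $t$ because $\Omega_\alpha$ is open. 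Alternatively, one can first note that the subgradient inequality for $u_\alpha$ extends to $\overline{\Omega_\alpha}$ by continuity, and then take $t=1$ directly. Subtracting $u_\alpha(x_\alpha)=(1-\alpha)u_0(x_0)+\alpha u_1(x_1)$ and dividing by $1-\alpha$ gives
\[
u_0(x_0+t(y-x_0))\ge u_0(x_0)+t\,p\cdot(y-x_0).
\]
Taking $t=1$ (via the continuity extension) yields $u_0(y)\ge u_0(x_0)+p\cdot(y-x_0)$, so $p\in\p u_0(x_0)$. If one only uses small $t$, convexity of $u_0$ along the segment still upgrades the local inequality to the global one. Then $x_0\in\p u_0^*(p)$, and since $u_0$ is strictly convex, $u_0^*$ is differentiable on $\p u_0(\Omega_0)$ as recalled above, so $x_0=Du_0^*(p)$. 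The case $x_1\in\Omega_1$ is symmetric. Because $(x_0,x_1)\notin\p\Omega_0\times\p\Omega_1$, at least one of $x_0\in\Omega_0$ or $x_1\in\Omega_1$ holds, and this gives the inclusion $\p u_\alpha(\Omega_\alpha)\subset\p u_0(\Omega_0)\cup\p u_1(\Omega_1)$.

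For part (ii), suppose $x_1\in\p\Omega_1$. Then $x_0\in\Omega_0$, so by (i) we have $p\in\p u_0(x_0)$ and $u_1(x_1)=0$. Assume for contradiction that $p\in\p u_1(z)$ for some $z\in\Omega_1$. The same perturbation argument in the $x_1$ slot, moving $x_1$ toward $z$ inside $\overline{\Omega_1}$, shows that $u_1(y)\ge u_1(x_1)+p\cdot(y-x_1)$ for all $y\in\overline{\Omega_1}$; that is, $p$ supports $u_1$ at the boundary point $x_1$. Combining this with $u_1(x_1)\ge u_1(z)+p\cdot(x_1-z)$ forces
\[
u_1(z)+p\cdot(x_1-z)=u_1(x_1)=0.
\]
So $u_1$ coincides with an affine function at $z$ and at $x_1$, and by convexity together with the two support inequalities it is affine on the segment $[z,x_1]$. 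This contradicts strict convexity of $u_1$, since $z\ne x_1$ (one lies in $\Omega_1$, the other on $\p\Omega_1$). The statement for $x_0\in\p\Omega_0$ follows by symmetry.

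The main obstacle is the boundary-point support inequality in part (ii). The subgradient inequality for $u_\alpha$ is stated only on $\Omega_\alpha$, so it must be extended to $\overline{\Omega_\alpha}$ by continuity of $u_\alpha\in C(\overline{\Omega_\alpha})$. Strict convexity must also be interpreted to include segments that end on the boundary, using continuity of $u_1$ up to $\p\Omega_1$.
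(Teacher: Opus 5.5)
Your proposal is correct and follows essentially the same route as the paper. In (i) you test the subgradient inequality of $u_\alpha$ at $(1-\alpha)y+\alpha x_1$; the paper takes $t=1$ directly, noting that $(1-\alpha)z_0+\alpha x_1\in\Omega_\alpha$, so your small-$t$ convexity upgrade is unnecessary but harmless. In (ii), your support inequality for $u_1$ at the boundary point $x_1$ combined with $p\in\p u_1(z)$ is the same pair of inequalities as the paper's single chain through $w_\alpha=(1-\alpha)x_0+\alpha y_1$, and it ends in the same contradiction with strict convexity of $u_1$ on the segment $[z,x_1]$.
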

\begin{proof} 

We prove part (i). Assume $x_0\in\Omega_0$. We show that $p\in \p u_0(x_0)\subset  \p u_{0}(\Omega_0)$.
Indeed, let $z_0\in\Omega_0$. Consider $z_\alpha: = (1-\alpha)z_0 +\alpha x_1\in \Omega_\alpha$.
From the definitions of $u_\alpha$ and $p$, we have
\begin{equation*}
\begin{split}
(1-\alpha)u_0(z_0) +\alpha u_1(x_1)&\geq u_\alpha (z_\alpha)\\
&\geq u_\alpha (x_\alpha) + p\cdot (z_\alpha- x_\alpha)\\
&= (1-\alpha)u_0(x_0) +\alpha u_1(x_1) + (1-\alpha) p\cdot (z_0-x_0). 
\end{split}
\end{equation*}
Since $\alpha\in (0, 1)$, it follows that 
\[u_0(z_0)\geq u_0(x_0) + p\cdot (z_0-x_0).\]
This shows that $p\in \p u_0(x_0)$. By a property of the Legendre transform, we then have $x_0\in \p u_0^* (p)$. Since $u_0$ is strictly convex, $u_0^*\in C^1(\p u_{0}(\Omega_0))$, so $x_0= Du_0^*(p)$. Part (i) is proved.

\medskip
We prove part (ii). Assume $x_1\in\p\Omega_1$. We prove that $p\not \in \p u_{1}(\Omega_1)$. If otherwise, $p\in \p u_1(y_1)$ where $y_1\in\Omega_1$. Let $w_\alpha= (1-\alpha)x_0 +\alpha y_1$. 
Then
\begin{equation*}
\begin{split}
u_\alpha(x_\alpha) &= (1-\alpha)u_0(x_0) +\alpha u_1(x_1) \\
&\geq (1-\alpha)u_0(x_0) + \alpha [u_1(y_1) + p\cdot (x_1-y_1)]\\
&=  (1-\alpha)u_0(x_0)  +\alpha u_1 (y_1) + p\cdot (x_\alpha - w_\alpha)\\
&\geq u_\alpha(w_\alpha) + p\cdot (x_\alpha - w_\alpha)  \quad (\text{by the definition of } u_\alpha)\\
&\geq u_\alpha(x_\alpha) \quad (\text{since } p\in \p u_{\alpha}(x_\alpha)).
\end{split}
\end{equation*}
Thus, we must have all equalities. In particular, we must have
\[u_1(x_1) = u_1(y_1) + p\cdot (x_1-y_1).\]
This is impossible due to the strict convexity of $u_1$ and $y_1\in\Omega_1$ and $x_1\in\p\Omega_1$.

The proposition is proved.
\end{proof}

Although the union of $\p u_{0}(\Omega_0)$ and $\p u_{1}(\Omega_1)$ might strictly contain $\p u_{\alpha}(\Omega_\alpha)$, we show that their intersection is contained in $\p u_{\alpha}(\Omega_\alpha)$.
\begin{prop}
\label{obs2} 
Assume \eqref{Sta}. Let $u_\alpha$ be as in \eqref{infc}. Let $p\in \p u_0(x_0) \cap  \p u_1(x_1)$ where $x_0\in\Omega_0$ and $x_1\in\Omega_1$.  Let
$x_\alpha= (1-\alpha)x_0 +\alpha x_1\in\Omega_\alpha$.
Then
\[p\in  \p u_{\alpha}(x_\alpha)\quad\text{and}\quad u_\alpha(x_\alpha)=(1-\alpha)u_0(x_0) +\alpha u_1(x_1).\]
Consequently,
\[ \p u_{0}(\Omega_0)\cap  \p u_{1}(\Omega_1)\subset \p u_{\alpha}(\Omega_\alpha).\]
\end{prop}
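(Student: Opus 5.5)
The plan is to verify directly from the definitions that the affine function $\ell(z)=(1-\alpha)u_0(x_0)+\alpha u_1(x_1)+p\cdot(z-x_\alpha)$ supports $u_\alpha$ from below on $\Omega_\alpha$, and then to observe that $\ell(x_\alpha)$ is also an upper bound for $u_\alpha(x_\alpha)$. Both claims then follow at once.

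First, the upper bound. Since $x_\alpha=(1-\alpha)x_0+\alpha x_1$ with $x_0\in\overline{\Omega_0}$ and $x_1\in\overline{\Omega_1}$, the pair $(x_0,x_1)$ is admissible in \eqref{infc}. Hence
\[
u_\alpha(x_\alpha)\le (1-\alpha)u_0(x_0)+\alpha u_1(x_1).
\]

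Next, the lower bound. Fix any $z\in\overline{\Omega_\alpha}$ and any admissible decomposition $z=(1-\alpha)z_0+\alpha z_1$ with $z_i\in\overline{\Omega_i}$. Because $p\in\p u_i(x_i)$, we have $u_i(z_i)\ge u_i(x_i)+p\cdot(z_i-x_i)$ for $z_i\in\Omega_i$. By continuity of $u_i$ on $\overline{\Omega_i}$, this inequality extends to all $z_i\in\overline{\Omega_i}$; this small point is needed because the minimum in \eqref{infc} ranges over the closures. Multiplying the inequality for $i=0$ by $(1-\alpha)$, the one for $i=1$ by $\alpha$, and adding gives
\[
(1-\alpha)u_0(z_0)+\alpha u_1(z_1)\ge (1-\alpha)u_0(x_0)+\alpha u_1(x_1)+p\cdot(z-x_\alpha).
\]
Taking the minimum over all admissible decompositions of $z$ yields $u_\alpha(z)\ge \ell(z)$ for every $z\in\overline{\Omega_\alpha}$.

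Now combine the two bounds. Setting $z=x_\alpha$ in the lower bound gives $u_\alpha(x_\alpha)\ge\ell(x_\alpha)=(1-\alpha)u_0(x_0)+\alpha u_1(x_1)$. Together with the upper bound, this proves the identity $u_\alpha(x_\alpha)=(1-\alpha)u_0(x_0)+\alpha u_1(x_1)$. Substituting this identity back into the lower bound gives
\[
u_\alpha(z)\ge u_\alpha(x_\alpha)+p\cdot(z-x_\alpha)\quad\text{for all } z\in\Omega_\alpha,
\]
which is exactly $p\in\p u_\alpha(x_\alpha)$.

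Finally, we need $x_\alpha\in\Omega_\alpha$, i.e.\ that it is an interior point. This holds because $x_\alpha\in(1-\alpha)\Omega_0+\alpha\Omega_1$, and a Minkowski combination of open sets is open. The inclusion $\p u_0(\Omega_0)\cap\p u_1(\Omega_1)\subset\p u_\alpha(\Omega_\alpha)$ then follows by applying the above to an arbitrary $p$ in the intersection, with $x_0,x_1$ chosen so that $p\in\p u_i(x_i)$. There is no real obstacle here; the only care needed is the extension of the supporting inequalities to the boundary, which is handled by continuity.
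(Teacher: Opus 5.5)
Your proof is correct and follows the paper's argument. Adding the two supporting inequalities at $x_0$ and $x_1$ gives an affine minorant of $u_\alpha$ whose value at $x_\alpha$ is $(1-\alpha)u_0(x_0)+\alpha u_1(x_1)$, and this is then compared with the trivial upper bound coming from the admissibility of $(x_0,x_1)$. The only differences are minor refinements: you bound every admissible decomposition of $z$ rather than invoking the unique minimizing pair from Lemma \ref{Sal8}, so strict convexity is not needed for this step. You also explicitly justify the subgradient inequality at points $z_i\in\p\Omega_i$ by continuity, which the paper uses tacitly.
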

\begin{proof} 
Consider $z_\alpha\in\Omega_\alpha$. By Lemma \ref{Sal8}, 
there exists a unique couple of points $(z_0, z_1)\in \overline{\Omega_0} \times \overline{\Omega_1}$ such that
\[z_\alpha= (1-\alpha)z_0 +\alpha z_1\quad \text{and}\quad u_\alpha(z_\alpha)=(1-\alpha)u_0(z_0) +\alpha u_1(z_1).\]
We have from the definition of subdifferential and $p\in \p u_0(x_0) \cap  \p u_1(x_1)$ that
\begin{equation*}
\begin{split}
u_\alpha(z_\alpha)&=(1-\alpha)u_0(z_0) +\alpha u_1(z_1) \\
&\geq (1-\alpha)[u_0(x_0) + p\cdot (z_0-x_0)] + \alpha [u_1(x_1) + p\cdot (z_1-x_1)]\\
&= (1-\alpha)u_0(x_0) +\alpha u_1(x_1) + p\cdot (z_\alpha-x_\alpha)\\
&\geq u_\alpha (x_\alpha)  + p\cdot (z_\alpha-x_\alpha) \quad (\text{by the definition of } u_\alpha).
\end{split}
\end{equation*}
It follows that $p\in  \p u_{\alpha}(x_\alpha)$. By choosing $z_\alpha=x_\alpha$, we find that all the above inequalities must be equalities. In particular, $u_\alpha(x_\alpha)=(1-\alpha)u_0(x_0) +\alpha u_1(x_1)$. The proposition is proved.
\end{proof}
Lemma \ref{Sal8} and Propositions \ref{obs1} and \ref{obs2} motivate the following decomposition.
\begin{defn} Assume \eqref{Sta}. Let $u_\alpha$ be as in \eqref{infc}. We decompose 
\begin{equation}
\label{3sd}
\p u_{\alpha}(\Omega_\alpha) = S_1\cup S_2\cup S_3,\end{equation}
where
\begin{equation*}
\begin{split}
S_1&=\{p\in  \p u_{\alpha}(x_\alpha): x_\alpha=(1-\alpha)x_0 +\alpha x_1,\, \\ & \quad\quad\quad\quad\quad\quad u_\alpha(x_\alpha)=(1-\alpha)u_0(x_0) +\alpha u_1(x_1), x_0\in\Omega_0, x_1\in\Omega_1\}, \\
S_2&=\{p\in  \p u_{\alpha}(x_\alpha): x_\alpha=(1-\alpha)x_0 +\alpha x_1,\,  \\ & \quad\quad\quad\quad\quad\quad  u_\alpha(x_\alpha)=(1-\alpha)u_0(x_0) +\alpha u_1(x_1), x_0\in\Omega_0, x_1 \in\p\Omega_1\}, \\
S_3&=\{p\in  \p u_{\alpha}(x_\alpha): x_\alpha=(1-\alpha)x_0 +\alpha x_1,\,  \\ & \quad\quad\quad\quad\quad\quad  u_\alpha(x_\alpha)=(1-\alpha)u_0(x_0) +\alpha u_1(x_1), x_0\in\p\Omega_0, x_1\in\Omega_1\}.
\end{split}
\end{equation*}
\end{defn}
The effect of the decomposition \eqref{3sd} on the map $u_\alpha \circ D u^*_\alpha$ is as follows.
\begin{rem}
\label{obs3}
Assume \eqref{Sta}. Let $u_\alpha$ be as in \eqref{infc}. Consider the decomposition \eqref{3sd}.
From Propositions \ref{obs1} and \ref{obs2}, we have 
\[S_1= \p u_{0}(\Omega_0)\cap  \p u_{1}(\Omega_1),\quad  S_2\subset \p u_{0}(\Omega_0)\setminus  \p u_{1}(\Omega_1),\quad S_3\subset \p u_{1}(\Omega_1)\setminus  \p u_0(\Omega_0).\]
If $p\in S_1$, then Proposition \ref{obs1}
gives
\[u_\alpha (D u^*_\alpha(p)) = (1-\alpha) u_0 (D u^*_0(p)) + \alpha u_1 (D u^*_1(p)).\]
If $p\in S_2$, then Proposition \ref{obs1}
gives
\[u_\alpha (D u^*_\alpha(p)) = (1-\alpha) u_0 (D u^*_0(p)).\]
If $p\in S_3$, then Proposition \ref{obs1}
gives
\[u_\alpha (D u^*_\alpha(p)) = \alpha u_1 (D u^*_1(p)).\]
\end{rem}

\medskip
We have the following sublinearity of the Monge--Amp\`ere energy $\int_\Omega |u| d\mu_u$ under the infimum convolution of convex functions.
\begin{prop} 
\label{obs4}
Assume \eqref{Sta}. Let $u_\alpha$ be as in \eqref{infc}.
Then
\[\int_{\Omega_\alpha} |u_\alpha| d\mu_{u_\alpha} \leq (1-\alpha) \int_{\Omega_0} |u_0| d\mu_{u_0} + \alpha \int_{\Omega_1} |u_1| d\mu_{u_1}.\]
\end{prop}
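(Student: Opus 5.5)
We will use the change of variables formula of Lemma \ref{change_lem} with $f=|u_\alpha|$ (continuous on $\Omega_\alpha$) to write
\[\int_{\Omega_\alpha} |u_\alpha|\, d\mu_{u_\alpha} = \int_{\p u_\alpha(\Omega_\alpha)} |u_\alpha(Du_\alpha^*(p))|\, dp,\]
and similarly for $u_0$ and $u_1$. Since $u_\alpha$ is strictly convex by Lemma \ref{Sal8}, $u_\alpha^*\in C^1(\p u_\alpha(\Omega_\alpha))$, so $Du_\alpha^*(p)=x_\alpha$ is the unique point with $p\in\p u_\alpha(x_\alpha)$. The same holds for $u_0$ and $u_1$. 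All integrands are nonnegative because $u_i\le 0$.

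Next, we split the integral over $\p u_\alpha(\Omega_\alpha)$ according to the decomposition \eqref{3sd}. The sets $S_1,S_2,S_3$ are pairwise disjoint, because the couple $(x_0,x_1)$ attached to $p$ is unique. By Remark \ref{obs3}, on these pieces the integrand $|u_\alpha(Du_\alpha^*(p))|$ equals
\[(1-\alpha)|u_0(Du_0^*(p))|+\alpha|u_1(Du_1^*(p))| \ \text{on } S_1,\qquad (1-\alpha)|u_0(Du_0^*(p))| \ \text{on } S_2,\qquad \alpha|u_1(Du_1^*(p))| \ \text{on } S_3.\]
Here we use $u_\alpha\le0$, $u_0\le 0$ and $u_1\le 0$, so the signs combine linearly. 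On $S_2$ the point $x_1$ lies on $\p\Omega_1$ and $u_1(x_1)=0$; likewise on $S_3$.

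Summing the three pieces, we regroup to get
\[\int_{\Omega_\alpha}|u_\alpha|\,d\mu_{u_\alpha} = (1-\alpha)\int_{S_1\cup S_2}|u_0(Du_0^*(p))|\,dp + \alpha\int_{S_1\cup S_3}|u_1(Du_1^*(p))|\,dp.\]
Remark \ref{obs3} gives $S_1\cup S_2\subset \p u_0(\Omega_0)$ and $S_1\cup S_3\subset\p u_1(\Omega_1)$. Since the integrands are nonnegative, we may enlarge the domains of integration to $\p u_0(\Omega_0)$ and $\p u_1(\Omega_1)$. Applying Lemma \ref{change_lem} again to $u_0$ and $u_1$ then yields the claimed inequality.

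The main technical point is measurability and well-definedness. The sets $S_i$ must be measurable, and the pointwise identities of Remark \ref{obs3} must hold everywhere on them, not only almost everywhere. Measurability of $S_1=\p u_0(\Omega_0)\cap\p u_1(\Omega_1)$ follows because the total subdifferential of a convex function over an open set is a countable union of compact sets. Then $S_2\cup S_3=\p u_\alpha(\Omega_\alpha)\setminus S_1$. Moreover, $S_3$ is Borel up to a null set, since $S_3\subset\p u_\alpha(\Omega_\alpha)\setminus\p u_0(\Omega_0)$ and this inclusion is in fact an equality on $\p u_\alpha(\Omega_\alpha)\setminus S_1$ by Proposition \ref{obs1}(ii). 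This suffices to justify the splitting of the integral.
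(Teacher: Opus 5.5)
Your proposal is correct and follows the paper's proof essentially step by step. It uses the change of variables of Lemma \ref{change_lem}, the splitting \eqref{3sd} with the pointwise identities of Remark \ref{obs3}, and then enlarges the domains of integration using the sign of the integrands. Your regrouping into integrals over $S_1\cup S_2\subset\p u_0(\Omega_0)$ and $S_1\cup S_3\subset\p u_1(\Omega_1)$ is slightly cleaner than the paper's bookkeeping, since it never evaluates $Du_1^*$ off $\p u_1(\Omega_1)$. Your measurability remarks, which the paper omits, are sound, and no null-set caveat is needed because $S_3=\p u_\alpha(\Omega_\alpha)\setminus\p u_0(\Omega_0)$ and $S_2=\p u_\alpha(\Omega_\alpha)\setminus\p u_1(\Omega_1)$ hold exactly.
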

\begin{proof} Since $u_0, u_1, u_\alpha\leq 0$, it suffices to show that 
\[\int_{\Omega_\alpha} u_\alpha d\mu_{u_\alpha} \geq (1-\alpha) \int_{\Omega_0} u_0 d\mu_{u_0} + \alpha \int_{\Omega_1} u_1 d\mu_{u_1}.\]
By Lemma \ref{change_lem}, this is equivalent to
\begin{equation}
\label{uadomi}
\int_{\p u_\alpha(\Omega_\alpha)} u_\alpha (D u^*_\alpha(p)) \, dp \geq \int_{\p u_0(\Omega_0)} u_0 (D u^*_0(p)) \, dp +  \int_{\p u_1(\Omega_1)} u_1 (D u^*_1(p)) \,dp.\end{equation}
Consider the decomposition \eqref{3sd}.

From Remark \ref{obs3},
we deduce that
\begin{equation}
\label{S1ineq}
\begin{split}
\int_{S_1} u_\alpha (D u^*_\alpha(p))\, dp 
&= (1-\alpha) \int_{\p u_{0}(\Omega_0)\cap  \p u_{1}(\Omega_1)} u_0 (D u^*_0(p))\,  dp\\ &\quad + \alpha \int_{\p u_{0}(\Omega_0)\cap  \p u_{1}(\Omega_1)}u_1 (D u^*_1(p))\, dp.
\end{split}
\end{equation}
From Remark \ref{obs3} and $u_0, u_1\leq 0$, we deduce that
\begin{equation}
\begin{split}
\int_{S_2} u_\alpha (D u^*_\alpha(p))\, dp &= (1-\alpha) \int_{S_2} u_0 (D u^*_0(p))\,  dp\\
&\geq (1-\alpha) \int_{\p u_{0}(\Omega_0)\setminus  \p u_{1}(\Omega_1)} u_0 (D u^*_0(p))\,  dp\\
&\geq (1-\alpha) \int_{\p u_{0}(\Omega_0)\setminus  \p u_{1}(\Omega_1)} u_0 (D u^*_0(p))\,  dp \\
&\quad \quad + \alpha \int_{\p u_{0}(\Omega_0)\setminus  \p u_{1}(\Omega_1)} u_1 (D u^*_1(p))\,  dp.
\end{split}
\end{equation}
Similarly,
\begin{equation}
\label{S3ineq}
\begin{split}
\int_{S_3} u_\alpha (D u^*_\alpha(p))\, dp 
&\geq (1-\alpha) \int_{\p u_{1}(\Omega_1)\setminus  \p u_0(\Omega_0)} u_0 (D u^*_0(p))\,  dp \\
&\quad \quad + \alpha \int_{\p u_{1}(\Omega_1)\setminus  \p u_0(\Omega_0)} u_1 (D u^*_1(p))\,  dp.
\end{split}
\end{equation}
Combining \eqref{S1ineq}--\eqref{S3ineq}, we obtain \eqref{uadomi}. The proposition is proved.
\end{proof}
\section{Proof of the Brunn--Minkowski inequality and equality case}
\label{sec_pf}
In this section, we prove Theorems \ref{BMcor} and \ref{eqthm}.

We recall \cite[Lemma 13]{Sal} and the discussion in \cite[p. 83]{Sal}.
\begin{lem} 
\label{Sal13}
Let $\Omega_0$ and $\Omega_1$ be bounded convex domains in $\R^n$ and $\alpha\in (0, 1)$. 
Let $\Omega_\alpha =(1-\alpha)\Omega_0 +\alpha \Omega_1$.
For each $i=0, 1$, let $u_i\in C(\overline{\Omega_i})$ be a strictly convex function 
in $\Omega_i$ that vanishes on $\p\Omega_i$ and satisfies $\int_{\Omega_i}|u_i|^{n+1}\, dx=1$.  Let $u_\alpha$ be the infimal convolution of $u_0$ and $u_1$. Then
\begin{equation}
\label{PLa}
\int_{\Omega_\alpha}|u_\alpha|^{n+1}\, dx\geq 1.\end{equation}
Equality happens if and only $u_0(x) = m^{n/(n+1)} u_1(mx + b)$ and $\Omega_1= m\Omega_0 + b$ for some $m>0$ and some $b\in\R^n$.
\end{lem}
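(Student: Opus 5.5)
The plan is to apply the Prékopa--Leindler inequality with exponent $\gamma=-1/n$ in its Borell--Brascamp--Lieb form to the functions $f_i=|u_i|^{n+1}\chi_{\Omega_i}$. That form reads: if $h((1-\alpha)x_0+\alpha x_1)\ge M_\gamma(f_0(x_0),f_1(x_1);\alpha)$, then $\int h\ge M_{\gamma/(1+n\gamma)}(\int f_0,\int f_1;\alpha)$. The cleaner route is to work with the concave functions $v_i=-u_i\ge0$ directly. By \eqref{infc}, $v_\alpha=-u_\alpha$ is the \emph{supremum} convolution
\[
v_\alpha(x)=\max\{(1-\alpha)v_0(x_0)+\alpha v_1(x_1): x=(1-\alpha)x_0+\alpha x_1\}.
\]
Hence $v_\alpha((1-\alpha)x_0+\alpha x_1)\ge (1-\alpha)v_0(x_0)+\alpha v_1(x_1)$ for all $x_i\in\overline{\Omega_i}$. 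This is exactly the hypothesis of Borell--Brascamp--Lieb with $p$-mean exponent $\gamma=1/(n+1)$ applied to $f_i=v_i^{n+1}$, after extending $f_i$ and $h=v_\alpha^{n+1}$ by zero outside the domains. Since $\gamma/(1+n\gamma)=1/(2n+1)$, we conclude
\[
\int_{\Omega_\alpha}v_\alpha^{n+1}\ge \Big((1-\alpha)\Big(\int v_0^{n+1}\Big)^{\frac{1}{2n+1}}+\alpha\Big(\int v_1^{n+1}\Big)^{\frac{1}{2n+1}}\Big)^{2n+1}=1,
\]
which is \eqref{PLa}. For the \emph{if} direction of the equality statement, suppose $u_0(x)=m^{n/(n+1)}u_1(mx+b)$ and $\Omega_1=m\Omega_0+b$. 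Then the normalizations force the scaling to be consistent, and a direct computation shows that $u_\alpha$ is again a rescaled copy of $u_0$ on $\Omega_\alpha=((1-\alpha)+\alpha m)\Omega_0+\alpha b$ with $L^{n+1}$ norm exactly $1$. I will verify this by noting that the maximizing pair in the supremum convolution is $x_1=mx_0+b$, using Lagrange/first-order conditions, since $Du_0(x_0)=m^{n/(n+1)+1}Du_1(mx_0+b)$.

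For the \emph{only if} direction I will invoke the known characterization of equality in Borell--Brascamp--Lieb for $\gamma>-1/n$, due to Dubuc. Equality, with both integrals positive and finite, forces $f_0$ and $f_1$ to be, up to null sets, homothetic copies of one another: there exist $m>0$ and $b$ such that $f_1(mx+b)=c\,f_0(x)$ a.e. for a constant $c$ determined by the mean constraint, and $h$ is the corresponding interpolant. Taking supports, which are the open convex sets $\Omega_i$ because $v_i>0$ inside by strict convexity and vanishing on the boundary, gives $\Omega_1=m\Omega_0+b$ up to a null set, hence exactly, since both sets are open and convex. The normalization $\int f_i=1$ then fixes $c=m^{-n}$. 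So $v_1(mx+b)^{n+1}=m^{-n}v_0(x)^{n+1}$, that is, $u_0(x)=m^{n/(n+1)}u_1(mx+b)$ by continuity.

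The main obstacle is the equality characterization. Dubuc's theorem is formulated for general measurable functions, and one must check that its hypotheses are met: $\gamma=1/(n+1)>0$ is in the admissible range, and the functions are $\gamma$-concave, which holds since the $v_i$ are concave. One must also be careful that the homothety it provides applies to the functions and not merely to their supports. If quoting Dubuc proves delicate, I would instead reprove equality by the standard transport argument. This means taking the Knothe map $T$ between the probability measures $f_0dx$ and $f_1dx$, and using equality in the pointwise AM--GM and $p$-mean steps along $x\mapsto(1-\alpha)x+\alpha T(x)$ to force $DT$ to be a constant multiple of the identity, hence $T(x)=mx+b$.
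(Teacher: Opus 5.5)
Your proof of the inequality is correct, and it differs from the paper's only mildly. The paper uses AM--GM and then Pr\'ekopa--Leindler for $|u_i|^{n+1}$; you apply Borell--Brascamp--Lieb with $\gamma=1/(n+1)$ directly, whose hypothesis is exactly the defining property of the supremum convolution. Under $\int f_0=\int f_1=1$ both routes give $\int_{\Omega_\alpha}|u_\alpha|^{n+1}\,dx\ge 1$, and your ``only if'' argument does reach the stated conclusion.

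\textbf{The gap is the ``if'' direction.} For $m\neq 1$ it is false, and your verification fails at the step you describe. At an optimal pair with $x_0\in\Omega_0$, $x_1\in\Omega_1$, the first-order condition is $Du_0(x_0)=Du_1(x_1)$: the two points share a subgradient, as in Proposition \ref{obs1}. Your own identity, however, gives $Du_0(x_0)=m^{(2n+1)/(n+1)}Du_1(mx_0+b)$. So $x_1=mx_0+b$ is not the optimal pair unless $m=1$, and $u_\alpha$ is not a rescaled copy of $u_0$.

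Your own tool shows why, once Dubuc's equality case is quoted exactly. The constant there is not free; it is tied to the dilation: $f_1(mx+b)=m^{1/\gamma}f_0(x)=m^{n+1}f_0(x)$. Hence $\int f_1=m^{2n+1}\int f_0$, and the normalization forces $m=1$. Your step ``$c=m^{-n}$ from the normalization'' discards exactly this information.

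A transparent way to see it: for $i=0,1,\alpha$ let $K_i=\{(x,y)\in\overline{\Omega_i}\times\R^{n+1}: |y|\le |u_i(x)|\}$. These are convex bodies in $\R^{2n+1}$ with $|K_i|=\omega_{n+1}\int_{\Omega_i}|u_i|^{n+1}dx$, where $\omega_{n+1}$ is the volume of the unit ball of $\R^{n+1}$, and $(1-\alpha)K_0+\alpha K_1=K_\alpha$. The inequality is therefore Brunn--Minkowski in $\R^{2n+1}$. Equality forces $K_0$ and $K_1$ to be homothetic, hence translates, since their volumes agree.

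For instance, $\Omega_1=2\Omega_0$ with $u_1(y)=2^{-n/(n+1)}u_0(y/2)$ satisfies every hypothesis of the ``if'' part. But $K_1$ projects onto $2\overline{\Omega_0}$, so it is not a translate of $K_0$, and $\int_{\Omega_\alpha}|u_\alpha|^{n+1}dx>1$.

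The correct statement is: equality holds iff $\Omega_1=\Omega_0+b$ and $u_0=u_1(\cdot+b)$. For this version the ``if'' part is immediate, since convexity gives $u_\alpha(x)=u_0(x-\alpha b)$. The paper's own proof also establishes only the ``only if'' direction, which is all Step 1 of the main proof needs. So restrict the ``if'' claim to $m=1$ rather than trying to prove it for general $m$.
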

For the reader's convenience, we include the simple argument.
\begin{proof}[Proof of Lemma \ref{Sal13}] From \eqref{infc}, we have for each $x\in\Omega_\alpha$
\[|u_\alpha(x)| =\max\big\{ (1-\alpha)|u_0(x_0)| +\alpha |u_1(x_1)|:  x_0\in  \overline{\Omega_0}, x_1\in  \overline{\Omega_1}, x= (1-\alpha)x_0 +\alpha x_1\big\}. \]
By the arithmetic-geometric inequality, we find
 \[|u_\alpha(x)| \geq \max\big\{ |u_0(x_0)|^{1-\alpha} |u_1(x_1)|^\alpha:  x_0\in  \overline{\Omega_0}, x_1\in  \overline{\Omega_1}, x= (1-\alpha)x_0 +\alpha x_1\big\}. \]
 Let $v_i(x)=|u_i(x)|^{n+1}$ for $x\in \Omega_i$ and $v_i(x) =0$ for $x\in\R^n\setminus \Omega_i$ where $i=0, 1,\alpha$. Then
  \[v_\alpha(x) \geq v_0(x_0)^{1-\alpha} v_1(x_1)^\alpha\quad \quad\text{whenever }\quad x= (1-\alpha)x_0 +\alpha x_1. \]
  By the Pr\'ekopa--Leindler inequality (see \cite[Theorem 6.4]{V}), we have
  \begin{equation}
\label{PLa1}\int_{\R^n} v_\alpha(x)\, dx \geq \Big(\int_{\R^n} v_0(x)\, dx\Big)^{1-\alpha} \Big(\int_{\R^n} v_1(x)\, dx\Big)^{\alpha}.\end{equation}
  This implies \eqref{PLa}.
  
  \medskip
  Equality in \eqref{PLa} happens if and only equality in \eqref{PLa1} happens. By the equality case of the Pr\'ekopa--Leindler inequality in Dubuc \cite[Theorem 12]{Dub}, this happens only if
  there exist $m>0$ and $b\in\R^n$ such that
  \[v_0(x) = \frac{\int_{\R^n} v_0(x)\, dx}{\int_{\R^n} v_1(x)\, dx} m^n v_1(mx + b)\quad\text{for almost everywhere } x\in\R^n.\]
  This implies $\Omega_1= m\Omega_0 + b$ and
    \[u_0(x) =  m^{n/(n+1)} u_1(mx + b)\quad\text{for all } x\in\Omega_0.\]
The lemma is proved.
\end{proof}
\begin{proof}[Proof of Theorems \ref{BMcor} and \ref{eqthm}] We first prove the ``if'' part in Theorem \ref{eqthm}.
Suppose that $\Omega_1$ is homothetic to $\Omega_0$, that is, $\Omega_1= \beta \Omega_0 + b$ for some $\beta>0$ and $b\in\R^n$. Then 
\[\Omega_\alpha:=(1-\alpha)\Omega_0 + \alpha\Omega_1 =(1-\alpha + \alpha\beta)\Omega_0  + \alpha b. \]
We compute, using the homogeneity of degree $(-2n)$ of $\lambda[\cdot]$,
\[\lambda[\Omega_1] = \beta^{-2n} \lambda[\Omega_0],\quad \lambda[\Omega_\alpha]= (1-\alpha + \alpha\beta)^{-2n}\lambda[\Omega_0]. \]
Therefore,
\[\lambda\big[(1-\alpha)\Omega_0 + \alpha\Omega_1\big]^{-\frac{1}{2n}}=  (1-\alpha + \alpha\beta)\lambda[\Omega_0]^{-\frac{1}{2n}}= (1-\alpha) \lambda [\Omega_0]^{-\frac{1}{2n}} + \alpha \lambda[\Omega_1]^{-\frac{1}{2n}}.\]

\medskip
We now prove Theorem \ref{BMcor} and the ``only if'' part in Theorem \ref{eqthm}.
\medskip

\noindent
{\bf Step 1.}
Let $\Omega_0$ and $\Omega_1$ be bounded convex domains in $\R^n$ and $\alpha\in (0, 1)$. We claim that 
\begin{equation}
\label{BMequi1}
\lambda[\Omega_\alpha]
\leq (1-\alpha) \lambda[\Omega_0] + \alpha\lambda[\Omega_1],
\end{equation}
and equality occurs only if $\Omega_1$ is homothetic to $\Omega_0$.

Indeed, let $u_i\in C^{\infty}(\Omega_i)\cap C^{0, 1}(\overline{\Omega_i})$ be the Monge--Amp\`ere eigenfunction of $\Omega_i$ with 
\[u_i=0\quad\text{on}\quad \p\Omega_i \quad\text{and}\quad \int_{\Omega_i}|u_i|^{n+1}\, dx=1.\]
Then
\[\lambda[\Omega_i]=\int_{\Omega_i} |u_i| \, d\mu_{u_i}.\]
Let $u_\alpha$ be the infimal convolution of $u_0$ and $u_1$. Then, by Lemma \ref{Sal8}, $u_\alpha \in  C(\overline{\Omega_\alpha})$ is strictly convex with the following property:
\begin{equation*}
u_\alpha = 0\quad \text{on }\p\Omega_\alpha,\quad \text{and } \quad u_\alpha < 0\quad \text{in }\Omega_\alpha.\end{equation*}
Due to $u_i\in C^{0, 1}(\overline{\Omega_i})$, we have in fact $u_\alpha \in  C^{0, 1}(\overline{\Omega_\alpha})$, by Proposition \ref{obs1} (i).

From \eqref{lam_def}, Lemma \ref{Sal13} and Proposition \ref{obs4}, we have
\begin{equation*}
\begin{split}
\lambda[\Omega_\alpha] \leq \frac{\int_{\Omega_\alpha} |u_\alpha|\, d\mu_{u_\alpha} }{\int_{\Omega_\alpha}|u_\alpha|^{n+1}\, dx}&\leq
\int_{\Omega_\alpha} |u_\alpha|\, d\mu_{u_\alpha}  \\&\leq (1-\alpha) \int_{\Omega_0} |u_0| d\mu_{u_0} + \alpha \int_{\Omega_1} |u_1| d\mu_{u_1}\\
&=(1-\alpha) \lambda[\Omega_0] + \alpha\lambda[\Omega_1],
\end{split}
\end{equation*}
proving the claimed inequality \eqref{BMequi1}.
Equality occurs only if equality occurs in Lemma \ref{Sal13}, that is $\Omega_1$ is homothetic to $\Omega_0$.

\medskip
\noindent
{\bf Step 2.} We prove the Brunn--Minkowski inequality and determine the equality case. Let $\Omega_0$ and $\Omega_1$ be bounded convex domains in $\R^n$ and $\alpha\in (0, 1)$.
Let us consider
\[\Omega'_0 = \lambda [\Omega_0]^{\frac{1}{2n}}\Omega_0,\quad \Omega'_1 = \lambda [\Omega_1]^{\frac{1}{2n}}\Omega_1,\quad \alpha'= \frac{\alpha \lambda[\Omega_1]^{-\frac{1}{2n}}}{(1-\alpha) \lambda [\Omega_0]^{-\frac{1}{2n}} + \alpha \lambda[\Omega_1]^{-\frac{1}{2n}}}.\]
Then
\[\lambda[\Omega'_0]=\lambda[\Omega'_1]=1,\]
and
\[(1-\alpha')\Omega'_0 +\alpha' \Omega'_1 = \frac{1}{(1-\alpha) \lambda [\Omega_0]^{-\frac{1}{2n}} + \alpha \lambda[\Omega_1]^{-\frac{1}{2n}}} \Big( (1-\alpha)\Omega_0 +\alpha \Omega_1 \Big).\]
Applying Step 1 to $\Omega'_0,\Omega'_1$ with $\alpha'\in (0, 1)$, we have
\[\lambda[(1-\alpha')\Omega'_0 +\alpha' \Omega'_1]\leq (1-\alpha') \lambda[\Omega'_0] +\alpha' \lambda[\Omega'_1]=1.\]
This means that
\[\Big((1-\alpha) \lambda [\Omega_0]^{-\frac{1}{2n}} + \alpha \lambda[\Omega_1]^{-\frac{1}{2n}}\Big)^{2n} \lambda[(1-\alpha)\Omega_0 +\alpha \Omega_1]\leq 1,\]
which is exactly
\[\lambda\big[(1-\alpha)\Omega_0 + \alpha\Omega_1\big]^{-\frac{1}{2n}}\geq (1-\alpha) \lambda [\Omega_0]^{-\frac{1}{2n}} + \alpha \lambda[\Omega_1]^{-\frac{1}{2n}}.\]
This proves Theorem \ref{BMcor}. 

From Step 1, we see that
equality case happens only if $\Omega'_1$ is homothetic to $\Omega'_0$, which is equivalent to $\Omega_1$ being homothetic to $\Omega_0$. This completes the proof of Theorem \ref{eqthm}.
\end{proof}

\end{document}